\documentclass[12pt]{extarticle}
\usepackage{amsmath, amsthm, amssymb, color}
\usepackage[colorlinks=true,linkcolor=blue,urlcolor=blue]{hyperref}
\usepackage{graphicx}
\usepackage{mathtools}
\usepackage{enumerate}
\usepackage{verbatim}
\usepackage{tikz,tikz-cd,tikz-3dplot}
\usetikzlibrary{matrix}
\usetikzlibrary{arrows}
\usepackage[ruled,linesnumbered]{algorithm2e}
\usepackage{caption}
\usepackage[normalem]{ulem}
\usepackage{subcaption}
\usepackage{multicol}
\oddsidemargin \evensidemargin
\usepackage{makecell}
\usepackage{array}
\usepackage{enumitem}
\usepackage{booktabs}

\newtheorem{theorem}{Theorem}[section]

\newtheorem{proposition}[theorem]{Proposition}
\newtheorem{lemma}[theorem]{Lemma}

\theoremstyle{definition}

\newtheorem{examplex}[theorem]{Example}
\newenvironment{example}
{\pushQED{\qed}\examplex}
{\popQED\endexamplex}

\newcommand{\PP}{\mathbb{P}}

\newcommand{\CC}{\mathbb{C} }

\newcommand{\NN}{\mathbb{N}}

\title{\bf Maximum Likelihood Estimation\\ on the Grassmannian of Lines}
\author{Hannah Friedman}
\date{}
\begin{document}
\maketitle
\begin{abstract}
  \noindent
  We study the positive Grassmannian through the lens of algebraic statistics.
A closed formula is presented for the maximum likelihood degree of the
Grassmannian of lines.
We study the real and positive critical points for the Grassmannian of lines in 3-space. 
\end{abstract}

 \let\thefootnote\relax\footnote{\noindent\emph{MSC2020:} 62R01, 14M15, 65H14.}

\section{Introduction}
The Grassmannian $\mathrm{Gr}(d,n)$ plays an important role as a statistical model in physics and probability.
In physics, it appears via the configuration space $X(d,n) \coloneq \mathrm{Gr}^\circ(d,n)/(\CC^*)^n$ of $n$ points in projective space $\PP^{d-1}$ \cite{ABFKST,BBB,ST}.
In statistics, it appears via the squared Grassmannian, the underlying model of projection determinantal point processes \cite{DFRS, friedman2025}.
The likelihood geometry of these two models was studied for $d = 2$ in \cite{BBB,ST} and \cite{friedman2025}, respectively. 

In this article, we study the positive Grassmannian, a central object in algebraic combinatorics; see, e.g., \cite{williams2023}.
We view the positive Grassmannian as a statistical model.
Consider a probability distribution on the $d$-subsets of $[n] = \{1,\ldots,n\}$, denoted $\binom{[n]}{d}$, where the probability of selecting a subset $I$ is proportional to the Pl\"ucker coordinate $x_I$.
In symbols,
\begin{align*}
  \mathrm{Prob}\big (Z = I\big ) \,\,=\,\, \frac{x_I}{\sum_{J \in \binom{[n]}{d}}x_J}, && \textrm{for all  }I \in \binom{[n]}{d}.
\end{align*}
The probability of an event must be a real nonnegative number. 
Therefore, a point $x = (x_I)_{I \in \binom{[n]}{d}}\in \mathrm{Gr}(d,n)_{\mathbb R}\subseteq \mathbb P^{\binom{n}{d}-1}_{\mathbb R}$ defines a probability distribution when all coordinates have the same sign.
When this is the case, we say $x$ is a point in the \emph{positive Grassmannian}
 \[\mathrm{Gr}(d,n)_{>0} = \{x \in \mathrm{Gr}(d,n)_{\mathbb R} : \mathrm{sgn}(x_{I}) > 0 \textrm{ for all } I \subseteq [n],\, |I| = d\}.\]

In likelihood geometry, a data point $u \in \NN^{\binom{n}{d}}$ represents a sample from an unknown probability distribution in our model $\mathrm{Gr}(d,n)_{>0}$.
We estimate this unknown distribution by computing the maximizer of the \emph{log-likelihood function}
\begin{equation*}%\label{eq:Lu}
  L_u(x) = \sum_{I \in \binom{[n]}{d}} u_I\log(x_I) - (\sum_{I \in \binom{[n]}{d}} u_I)\log(\sum_{I \in \binom{[n]}{d}} x_I)
\end{equation*}
subject to the constraint that $x$ is in the positive Grassmannian $\mathrm{Gr}(d,n)_{>0}$.
The maximizer $x^*$ is called the \emph{maximum likelihood estimate}, and it is the distribution that was most likely sampled from if one observed the data point $u$. 
To solve this optimization problem accurately, one must compute all critical points of the problem.
The number of complex critical points is finite and constant for generic data $u$; this number is called the \emph{maximum likelihood degree} (ML degree) \cite{CHKS,HS}. 
The ML degrees of the Grassmannian were first computed for $d=2$ and $n=4,5$ in \cite[p. 400]{HKS}.
More ML degrees were found numerically in \cite[Theorem 4.1]{DFRS}.
Our main result is a closed formula for the ML degree of the Grassmannian of lines ($d=2$):
\begin{theorem}\label{thm:grass-ml-deg}
  The Grassmannian $\mathrm{Gr}(2,n)$ has ML degree $(2^{n-1} - n)(n-3)!$.
\end{theorem}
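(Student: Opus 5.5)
The plan is to compute the ML degree as a signed topological Euler characteristic. Let $N=\binom{n}{2}$ and let $U\subset \mathrm{Gr}(2,n)$ be the open subset where all Pl\"ucker coordinates $x_{ij}$ and the sum $\sum_{i<j}x_{ij}$ are nonzero. Since $U$ is smooth and very affine, Huh's theorem (see \cite{CHKS,HS}) gives $\mathrm{MLdeg}\,\mathrm{Gr}(2,n)=(-1)^{\dim U}\chi(U)$, where $\dim U=2(n-2)$. So it suffices to show $|\chi(U)|=(2^{n-1}-n)(n-3)!$, with the sign matching $(-1)^{2(n-2)}=1$.

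\textbf{Step 1: Dehomogenize.} Identify $U$ with the affine variety
\[
\widehat U=\Big\{x\in \widehat{\mathrm{Gr}}^\circ(2,n)\;:\;\sum_{i<j}x_{ij}=1\Big\},
\]
where $\widehat{\mathrm{Gr}}^\circ(2,n)$ is the affine cone with all Pl\"ucker coordinates nonzero. The torus $(\CC^*)^n$ acts by $x_{ij}\mapsto t_it_jx_{ij}$. Its quotient of $\mathrm{Gr}^\circ(2,n)$ is $X(2,n)=\mathcal M_{0,n}$, whose Euler characteristic is $(-1)^{n-3}(n-3)!$.

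\textbf{Step 2: Fiber over $\mathcal M_{0,n}$.} The projection $\widehat U\to\mathcal M_{0,n}$ has fibers
\[
F_c=\{t\in(\CC^*)^n : q_c(t)=1\}/\{\pm1\},\qquad q_c(t)=\sum_{i<j}c_{ij}t_it_j ,
\]
where $c=(c_{ij})$ is a representative of the point of $\mathcal M_{0,n}$ and $\pm1$ is the stabilizer of the action. The map is an algebraic family, so I would stratify $\mathcal M_{0,n}$ into pieces over which it is locally trivial. If $\chi(F_c)$ is constant, multiplicativity gives $\chi(\widehat U)=\chi(F_c)\,\chi(\mathcal M_{0,n})$.

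\textbf{Step 3: Euler characteristic of the fiber.} By Kouchnirenko--Khovanskii, for a Laurent polynomial that is nondegenerate with respect to its Newton polytope $P$, the hypersurface in $(\CC^*)^n$ has Euler characteristic $(-1)^{n-1}\,\mathrm{Vol}(P)$, with $\mathrm{Vol}$ the normalized volume. Here $P=\mathrm{conv}(0,\Delta(2,n))$ is the pyramid over the hypersimplex with apex at the origin, at lattice height $2$. The hypersimplex $\Delta(2,n)$ has normalized volume given by the Eulerian number $A(n-1,1)=2^{n-1}-n$. Hence $\mathrm{Vol}(P)=2(2^{n-1}-n)$, and the free $\pm1$ quotient halves this, so $|\chi(F_c)|=2^{n-1}-n$. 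Combining with Step 1 yields $|\chi(U)|=(2^{n-1}-n)(n-3)!$. The signs are then checked to give a positive count.

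\textbf{Main obstacle.} The coefficients $c_{ij}=\det\begin{pmatrix}a_i&a_j\\ b_i&b_j\end{pmatrix}$ are not generic: the matrix $(c_{ij})$ is skew-rank $2$ and satisfies the Pl\"ucker relations. So nondegeneracy of $q_c-1$ on every face of $P$ must be proved directly.
\begin{itemize}
\item Faces not containing $0$ are faces of $\Delta(2,n)$. These are products of smaller hypersimplices and simplices obtained by fixing coordinates. Their initial forms are restrictions $\sum_{i<j\in S}c_{ij}t_it_j$, possibly multiplied by monomials. A critical point in the torus would require $\sum_j c_{ij}t_j=0$ for all $i\in S$ and $q=0$. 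I expect to rule this out by writing $c_{ij}=\omega(v_i,v_j)$ for a symplectic form on $\CC^2$. Then $\sum_j c_{ij}t_j=\omega(v_i,\sum_j t_jv_j)$, and this vanishing for all $i\in S$ with the $v_i$ spanning $\CC^2$ (for $|S|\ge 2$) forces $\sum_j t_jv_j=0$. That vector relation should be incompatible with the face restrictions, which I would check case by case.
\item Faces containing $0$ similarly reduce to the equations $q_c=1$ and $dq_c=0$. By Euler's relation $q_c=\tfrac12\sum_i t_i\,\partial_i q_c$, so these are inconsistent.
\end{itemize}
If the symplectic argument fails on some faces, the fallback is to compute $\chi(F_c)$ for one explicit $c$ and use constancy over the strata.
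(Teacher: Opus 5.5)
You have the right architecture, and it is the paper's: Huh's theorem, the fibration of $\mathrm{Gr}(2,n)\setminus\mathcal H$ over $\mathcal M_{0,n}$, and Kouchnirenko--Khovanskii on the fibers.

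\begin{itemize}
\item Your fiber $\{q_c=1\}/\{\pm1\}$ is isomorphic to the paper's $\{[\lambda]\in(\CC^*)^n/\CC^* : f_x(\lambda)\neq 0\}$.
\item The pyramid volume $2(2^{n-1}-n)$, halved by the free $\pm1$ action, gives the correct fiber Euler characteristic $(-1)^{n-1}(2^{n-1}-n)$.
\item Your Euler-relation argument correctly disposes of the faces through $0$.
\item Faces of $\Delta_{2,n}$ that are simplices, including $\Delta(2,S)$ with $|S|\le 3$, are automatically nondegenerate because all coefficients are nonzero.
\end{itemize}

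So everything hinges on the faces $\Delta(2,S)$ with $|S|\ge 4$. You must show that $q_S(t)=\sum_{i<j\in S}x_{ij}t_it_j$ has no critical point in $(\CC^*)^S$ for every $x\in\mathrm{Gr}(2,n)^\circ$. Your proposal does not establish this.

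\textbf{The symplectic argument rests on a sign error.} In $q_S$ the coefficient of $t_it_j$ is the Pl\"ucker coordinate of the unordered pair. Hence $\partial_i q_S=\sum_{j\neq i}x_{\{i,j\}}t_j$, that is, $\nabla q_S=X_S t$ for the \emph{symmetric} zero-diagonal matrix $X_S$ of \eqref{eq:symmetric}.

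Your identity $\sum_j c_{ij}t_j=\omega(v_i,\sum_j t_jv_j)$ uses the skew convention $c_{ji}=-c_{ij}$. It therefore computes $X'_S t$ for the rank-$2$ skew matrix. Since $t^{\top}X'_S t\equiv 0$, Euler's relation shows this cannot be the gradient of the nonzero quadric $q_S$.

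With correct signs, $\partial_i q_S=\omega\bigl(v_i,\sum_{j>i}t_jv_j-\sum_{j<i}t_jv_j\bigr)$. The second argument changes with $i$, so nothing forces a single vector to vanish.

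Worse, the statement your reduction aims to prove is false. For $|S|\ge 4$, the set $\{\sum_j t_jv_j=0,\ q_S(t)=0\}$ is a quadric cone in an $(|S|-2)$-dimensional space and typically meets the torus. For example, with $v_1,\dots,v_4=(1,0),(0,1),(1,1),(1,2)$, the point $t=(1-\sqrt2,\,-\sqrt2,\,-2+\sqrt2,\,1)$ satisfies both conditions. So no case-by-case check can succeed.

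\textbf{What is actually needed} is $\det X_S\neq 0$ on $\mathrm{Gr}(2,n)^\circ$ for all $|S|\ge 4$. This is full rank of the symmetric matrix, the opposite of the rank-$2$ behaviour of the skew one. It is Proposition~\ref{prop:A-det}: on the Grassmannian, $\det X=(-1)^{n-1}2^{n-2}x_{12}x_{23}\cdots x_{n-1,n}x_{1n}$. The paper proves this by expanding with $X_{ij}=\pm(u_iv_j-u_jv_i)$ and evaluating restricted signed excedance enumerators (Lemma~\ref{lem:coeffs}). Alternatively, normalizing $x_{ij}=u_iu_j(a_j-a_i)$ reduces $\det X$ to the determinant of the distance matrix of $n$ points on a line. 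That determinant is given by the weighted-tree Graham--Pollak formula.

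\textbf{Your fallback does not close the gap.} Computing $\chi(F_c)$ at one $c$ only gives the generic value. Constancy of the fiber Euler characteristic over all of $\mathcal M_{0,n}$ is exactly what must be proved. When it fails, the product formula picks up correction terms, as the paper's $\mathrm{Gr}(3,6)$ discussion shows.
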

Theorem~\ref{thm:grass-ml-deg} is a partial solution to \cite[Problem 13]{HKS}.
The ML degree is an algebraic measure of the complexity of the maximum likelihood estimation problem.
Its factorial growth in $n$ indicates that maximum likelihood estimation is difficult for the Grassmannian.

The ML degree is now known for three statistical models on $\mathrm{Gr}(2,n)$, namely the positive Grassmannian $\mathrm{Gr}(2,n)_{>0}$ (Theorem~\ref{thm:grass-ml-deg}), the moduli space $X(2,n) = \mathcal M_{0,n}$ (\cite[Proposition 1]{ST}), and the squared Grassmannian~$\mathrm{sGr}(2,n)$ (\cite[Theorem 1.2]{friedman2025}):
\[
\begin{array}{cccccc}
  \toprule
  & \mathrm{Gr}(2,n)_{>0} && \mathcal M_{0,n} && \mathrm{sGr}(2,n)\\
  \midrule
  \textrm{ML degree} & (n-3)!(2^{n-1} - n) && (n-3)! && (n-1)!/2\\
  \bottomrule
\end{array}
\]

In Section~\ref{sec:proof}, we prove Theorem~\ref{thm:grass-ml-deg}.
We conclude the section with comments on the case $d = 3, n = 6$. 
In particular, we use numerical methods to compute two components of the intersection of the Grassmannian $\mathrm{Gr}(3,6)$ with the $A$-discriminant of the $(3,6)$ hypersimplex (Theorem~\ref{thm:3,6}).
%In Section~\ref{sec:positive}, we consider the reality and positivity of the critical point set when the data $u$ lies in the probability simplex $\Delta_{\binom n 2 - 1}$ with a focus on the case $n = 4$.
% We compute the logarithmic discriminant and discuss some of the regions in its complement. 
In Section~\ref{sec:positive}, we use numerical algebraic geometry to investigate the number of real and positive critical points of the maximum likelihood estimation problem on the Grassmannian with a focus on $\mathrm{Gr}(2,4)$.
We compute the logarithmic discriminant of $\mathrm{Gr}(2,4)$ and exhibit examples of data points for which the log-likelihood function has multiple positive critical points (Figure~\ref{fig:3crit}).
The software accompanying this paper is available at
\begin{center}
  \url{https://zenodo.org/records/21470087}.
\end{center}

\section{The ML Degree of the Grassmannian of Lines}\label{sec:proof}
We will determine the ML degree of the Grassmannian of lines by computing an Euler characteristic.
We explain the setup for general $d$ and $n$.
By \cite[Theorem 1.7]{HS}, the ML degree of $\mathrm{Gr}(d,n)$ is equal to the signed Euler characteristic of the very affine variety $\mathrm{Gr}(d,n)\backslash \mathcal H$ where $\mathcal H = \{x \in \PP^{\binom{n}{d}-1}: \prod_I x_I \cdot \sum_I x_I = 0\}$.
In symbols, \[\mathrm{MLdegree}(\mathrm{Gr}(d,n)) = (-1)^{d(n-d)}\chi(\mathrm{Gr}(d,n)\backslash \mathcal H).\]
In order to compute the Euler characteristic of this very affine variety, we will use the fact that the ML degree of the configuration space $X(d,n)$ is known for $d = 2$.
Let $\mathrm{Gr}(d,n)^\circ = \mathrm{Gr}(d,n)\backslash \{x \in \PP^{\binom{n}{d}-1}: \prod_I x_I =0\}$ denote the open Grassmannian. 
Taking the quotient of the open Grassmannian by the algebraic torus, we obtain the configuration space $X(d,n) = \mathrm{Gr}(d,n)^\circ/(\mathbb C^*)^n$. 
By the definition of $X(d,n)$, there exists a quotient map 
\begin{equation}\label{eq:quotient}
  \mathrm{Gr}(d,n) \backslash \mathcal H = \mathrm{Gr}(d,n)^\circ \backslash V(\sum_{I \in \binom{[n]}{d}}x_{I}) \longrightarrow X(d,n) = \mathrm{Gr}(d,n)^\circ/(\CC^*)^n,\qquad
  x \mapsto [x].
\end{equation}
Writing $f_x(\lambda) = \sum_{I \in \binom{[n]}{d}} x_I \prod_{i \in I}\lambda_i$ for the polynomial in $\lambda = (\lambda_i)_{i\in [n]}$ with coefficients $x$, the fiber of the point $[x] \in X(d,n)$ under \eqref{eq:quotient} is the set
\begin{align*}
  \mathcal F_x &= \{\lambda \cdot x : \lambda \in (\mathbb C^*)^n/\mathbb C^*,\,\,f_x(\lambda)
  \neq 0\} \subseteq \mathrm{Gr}(d,n)^\circ.
\end{align*}
Here $\lambda \cdot x$ denotes the point $(x_I \cdot \prod_{i \in I}\lambda_i)_{I \in \binom{[n]}{d}}$.
Note that the map \eqref{eq:quotient} is surjective, since $f_x(\lambda) = 0$ for all $\lambda$ implies that $x = 0$. 
If all fibers $\mathcal F_x$ have the same Euler characteristic, then, by multiplicativity of the Euler characteristic, 
\begin{equation}\label{eq:factor}
  \chi(\mathrm{Gr}(d,n) \backslash \mathcal H) =
  \chi(X(d,n)) \cdot \chi(\mathcal F_x). 
\end{equation}
If the Euler characteristic is not constant on fibers, then correction terms are added.
The fiber $\mathcal F_x$ is isomorphic to the set
\begin{equation}\label{eq:v-aff-hyper}
  \{\lambda \in (\mathbb C^*)^n/\mathbb C^* : f_x(\lambda)  \neq 0\}. 
\end{equation}
The Newton polytope of $f_x(\lambda)$ is the $(d,n)$-hypersimplex, denoted $\Delta_{d,n} = \mathrm{conv}(\sum_{i\in I}e_i : I \in \binom{[n]}{d})$ where $e_i$ is the $i$th standard basis vector.
For generic $x \in \mathbb P^{\binom n d -1}$, the Euler characteristic of $\mathcal F_x$ is the signed normalized volume of $\Delta_{d,n}$. 
By \cite[Theorem 2]{ABB}, the locus of coefficient vectors $x \in \mathbb P^{\binom n d - 1}$ where the Euler characteristic of $\mathcal F_x$ drops is the zero set of the principal $A$-determinant \cite[Chapter 10]{GKZ} of $\Delta_{d,n}$.  
Recall that the coefficients of $f_x$ are the Pl\"ucker coordinates of points in the open Grassmannian.
Therefore \eqref{eq:factor} holds if the principal $A$-determinant of $\Delta_{d,n}$ does not intersect the open Grassmannian $\mathrm{Gr}(d,n)^\circ$. 

We now turn to the case $d = 2$.
The configuration space $X(2,n) = \mathcal M_{0,n}$ is the moduli space of $n$ marked points in $\mathbb P^1$. 
We will prove that the intersection of the principal $A$-determinant of $\Delta_{2,n}$ with the open Grassmannian $\mathrm{Gr}(2,n)^\circ$ is empty. 
By \cite[Theorem 3.6]{CHKO}, the principal $A$-determinant of $\Delta_{2,n}$ is the product of principal minors of size at least $4$ of 
  \begin{equation}\label{eq:symmetric}
    X = \begin{pmatrix}
      0 & x_{12}& x_{13}& \cdots & x_{1n}\\
      x_{12} & 0 & x_{23} & \cdots & x_{2n}\\
      x_{13} & x_{23} & 0 & \cdots & x_{3n}\\
      \vdots & \vdots & \vdots & \ddots & \vdots\\
      x_{1n} & x_{2n} & x_{3n} & \cdots & 0
    \end{pmatrix}.
  \end{equation}
The bulk of this section is dedicated to proving that the principal minors of this matrix may be written as monomials in $x$ if $x$ is a point on the Grassmannian. 
\begin{proposition}\label{prop:A-det}
Suppose $x \in \mathrm{Gr}(2,n)$, and that $X$ is the symmetric matrix \eqref{eq:symmetric}.
Then $\det(X) = (-1)^{n-1}2^{n-2} x_{12}x_{23} \cdots x_{n-1, n}x_{1n}$.
In particular, the principal $A$-determinant of the second hypersimplex $\Delta_{2,n}$ does not intersect the open Grassmannian $\mathrm{Gr}(2,n)^\circ$. 
\end{proposition}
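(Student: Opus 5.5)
We prove the determinant identity by reducing it to an explicit point configuration in $\mathbb C^2$ and then doing elementary row operations. We work on the affine cone over $\mathrm{Gr}(2,n)$, where both sides of the identity are polynomials in the Pl\"ucker coordinates.

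\textbf{Reduction to points on a line.} Write $x_{ij} = a_ib_j - a_jb_i$ for $i<j$, where $p_i=(a_i,b_i)\in\mathbb C^2$ are the columns of a $2\times n$ matrix. Both sides of the claimed identity are homogeneous polynomials of degree $n$ in $x$. It therefore suffices to prove the identity on a Zariski dense subset of the cone and conclude by density.
\begin{itemize}
\item Rescaling $p_i\mapsto c_ip_i$ multiplies row $i$ and column $i$ of $X$ by $c_i$, so $\det X$ is multiplied by $\prod_i c_i^2$.
\item On the right-hand side every index appears in exactly two factors of the cyclic product $x_{12}x_{23}\cdots x_{n-1,n}x_{1n}$, so it is also multiplied by $\prod_i c_i^2$.
\end{itemize}
Hence we may assume $a_i\neq 0$ and normalize $p_i=(1,t_i)$. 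Then $x_{ij}=t_j-t_i$ for $i<j$, so $X_{ij}=t_{\max(i,j)}-t_{\min(i,j)}$. We treat this as a formal polynomial identity in $t$, with no ordering assumption on the $t_i$. Set $d_i=t_{i+1}-t_i$. The goal becomes
\[\det X=(-1)^{n-1}2^{n-2}(t_n-t_1)\,d_1\cdots d_{n-1}.\]

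\textbf{Row operations.} For $i=1,\dots,n-1$, replace row $i$ by row $i$ minus row $i+1$, keeping the original row $i+1$; this does not change the determinant. A direct check gives that the new row $i$ equals $d_i\cdot r_i$, where
\[r_i=(\underbrace{-1,\dots,-1}_{i},1,\dots,1).\]
Factoring out $d_1\cdots d_{n-1}$ leaves a matrix with rows $r_1,\dots,r_{n-1}$ and last row
\[(t_n-t_1,\ t_n-t_2,\ \dots,\ t_n-t_{n-1},\ 0).\]
Next, for $i\le n-2$, replace $r_i$ by $r_i-r_{i+1}$, which equals $2e_{i+1}$. The matrix now has rows $2e_2,\dots,2e_{n-1}$, the row $r_{n-1}=(-1,\dots,-1,1)$, and the last row. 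Expanding along the rows $2e_{i+1}$ contributes the factor $2^{n-2}$ and leaves a $2\times 2$ determinant in columns $1$ and $n$:
\[\det\begin{pmatrix}-1 & 1\\ t_n-t_1 & 0\end{pmatrix}=-(t_n-t_1).\]
The overall sign is then a permutation-sign bookkeeping.

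I expect this sign bookkeeping to be the only delicate step. To pin it down I would check $n=2$ and $n=3$ directly; for $n=3$ one gets $\det X=2x_{12}x_{23}x_{13}$, matching $(-1)^{2}2^{1}x_{12}x_{23}x_{13}$.

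\textbf{The ``in particular''.} A principal $k\times k$ minor of $X$ indexed by $S\subseteq[n]$ is the matrix \eqref{eq:symmetric} built from the Pl\"ucker coordinates $(x_{ij})_{i,j\in S}$. These are the Pl\"ucker coordinates of the configuration $(p_i)_{i\in S}$, which is a point of $\mathrm{Gr}(2,k)$ up to relabeling in increasing order. By the identity just proved, this minor is $\pm 2^{k-2}$ times a product of Pl\"ucker coordinates, so it is nonzero on $\mathrm{Gr}(2,n)^\circ$. By \cite[Theorem 3.6]{CHKO}, the principal $A$-determinant of $\Delta_{2,n}$ is the product of these minors for $k\ge 4$. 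Hence it does not vanish on $\mathrm{Gr}(2,n)^\circ$.
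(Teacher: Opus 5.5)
Your proof is correct, and it takes a genuinely different, more elementary route than the paper.

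\textbf{The paper's route.} The paper keeps the full rank-two parametrization $u_iv_j-u_jv_i$ and expands $\det X$ by Leibniz into monomials $u_Sv_Tu_{T^c}v_{S^c}$. It then evaluates each coefficient as a restricted signed excedance enumerator, via the determinants $\det(Q_{S,T})$ of Lemma~\ref{lem:coeffs}, which rest on Sivasubramanian's formulas. Only $T=S$ and $T=S+1$ survive, and these reassemble into the cyclic product.

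\textbf{Your route.} You use that both sides scale by $\prod_i c_i^2$ under $p_i\mapsto c_ip_i$, together with density, to dehomogenize to $p_i=(1,t_i)$. Then $X_{ij}=t_{\max(i,j)}-t_{\min(i,j)}$. The identities $\mathrm{row}_i-\mathrm{row}_{i+1}=d_ir_i$ and $r_i-r_{i+1}=2e_{i+1}$ both check out, and they reduce everything to a near-permutation matrix.

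\textbf{Comparison.} Your argument is self-contained and much shorter. It also shows that the statement is the weighted Graham--Pollak formula for a path: for real $t_1<\cdots<t_n$, $X$ is the distance matrix of the path with edge weights $d_i$. The paper's route avoids dehomogenization and produces the restricted excedance enumerator as a combinatorial by-product.

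\textbf{The sign.} You left the sign as bookkeeping and proposed checking $n=2,3$. That check alone would not fix the sign for all $n$, but the general case is immediate. After using the rows $2e_2,\dots,2e_{n-1}$ to clear columns $2,\dots,n-1$, the only surviving Leibniz term is the $n$-cycle $i\mapsto i+1$, $n\mapsto 1$, which has sign $(-1)^{n-1}$. Equivalently, the Laplace sign $(-1)^{n(n-2)}=(-1)^n$ times your $2\times 2$ determinant $-(t_n-t_1)$ gives $(-1)^{n-1}(t_n-t_1)$, as claimed.
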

\begin{proof}
  Let $X'$ denote the skew symmetric matrix
  \begin{equation*}
    \begin{pmatrix}
      0 & x_{12}& x_{13}& \cdots & x_{1n}\\
      -x_{12} & 0 & x_{23} & \cdots & x_{2n}\\
      -x_{13} & -x_{23} & 0 & \cdots & x_{3n}\\
      \vdots & \vdots & \vdots & \ddots & \vdots\\
      -x_{1n} & -x_{2n} & -x_{3n} & \cdots & 0
    \end{pmatrix}.
  \end{equation*}
  Since $x \in \mathrm{Gr}(2,n)$, the matrix $X'$ has rank $2$, and thus there exist $u,v \in \CC^n$ such that we can write the $(i,j)$-entry of $X'$ as $u_iv_j - u_jv_i$.
  Then the entries of $X$ are $X_{ij} =- (-1)^{i<j} (u_iv_j - u_jv_i)$ where $(-1)^{i<j}$ is $-1$ if $i < j$ and $1$ otherwise. 
  Therefore the determinant of $X$ is
  \begin{equation*}
    \det(X) = (-1)^n\sum_{\sigma \in \mathfrak S_n} \mathrm{sgn}(\sigma) \prod_{i=1}^n (-1)^{i<\sigma(i)} (u_iv_{\sigma(i)} - u_{\sigma(i)}v_i).
  \end{equation*}
  where $ \mathfrak S_n$ denotes the symmetric group on $n$ elements and $\mathrm{sgn}(\sigma)$ denotes the sign of a permutation $\sigma$.
  An index $i$ with $i < \sigma(i)$ is called an \emph{excedance} of $\sigma$, and we write $\mathrm{exc}(\sigma) = |\{i \in [n]: i < \sigma(i)\}|$ for the number of excedances in the permutation $\sigma$.
  For a subset $S \subseteq [n]$, we write $u_S = \prod_{i \in S}u_i$.
  Expanding the determinant gives
  \begin{align*}
    \det(X) &= (-1)^n \sum_{S \subseteq [n]}(-1)^{|S^c|} \sum_{\sigma \in \mathfrak S_n}\mathrm{sgn}(\sigma)(-1)^{ \mathrm{exc}(\sigma)} u_Sv_{\sigma(S)}u_{\sigma(S)^c}v_{S^c}\\
    &= (-1)^n \sum_{S \subseteq [n]} \sum_{\substack{T \subseteq [n] \\|T| = |S|}} (-1)^{|S^c|} \left (\sum_{\substack{\sigma \in \mathfrak S_n\\ \sigma(S) = T}}\mathrm{sgn}(\sigma)(-1)^{\mathrm{exc}(\sigma)}  \right )u_Sv_{T}u_{T^c}v_{S^c}
  \end{align*}
  where $S^c$ and $T^c$ denote the set complements $[n] \backslash S$ and $[n] \backslash T$.

  By Lemma~\ref{lem:coeffs}, if $S$ and $T$ are subsets of $[n]$, then the coefficients evaluate to
  \begin{equation*}
    \sum_{\substack{\sigma \in \mathfrak S_n\\ \sigma(S) = T}} \mathrm{sgn}(\sigma)(-1)^{\mathrm{exc}(\sigma)} =
    \begin{cases}
      2^{n-1} &\textrm{if } T = S = [n] \textrm{  or  } T = S = \emptyset\\
      2^{n-2} &\textrm{if } \emptyset \subsetneq T = S \subsetneq [n] \textrm{  or  } T = S+1;\\
      0 &\textrm{else.}
    \end{cases}
  \end{equation*}
  Here $S + 1$ denotes the set $\{(s \bmod n) + 1: s \in S\}$. 
  In our determinant, the monomial $u_{[n]}v_{[n]}$ appears whenever $S = T$.
  Thus its coefficient is
  \begin{equation}\label{eq:coeff}
    \begin{cases}
      2^n + 2^{n-2}\sum_{\emptyset \subsetneq S \subsetneq [n]} (-1)^{|S^c|} = 2^{n-1} &\textrm{if $n$ is even and}\\
      2^{n-2}\sum_{\emptyset \subsetneq S \subsetneq [n]} (-1)^{|S^c|} = 0 &\textrm{if $n$ is odd}.
    \end{cases}
  \end{equation}
  In either case, the determinant evaluates to 
  \begin{align*}
    \det(X) &= 
    (-1)^n \sum_{\emptyset \subseteq S \subseteq [n]} (-1)^{|S^c|}2^{n-2}u_Sv_{S+1}u_{{S^c+1}}v_{S^c}\\
    &= (-2)^{n-2}\prod_{i = 1}^{n} (u_iv_{(i\bmod n)+1} - u_{(i\bmod n)+1}v_{i})
    =
    -(-2)^{n-2} x_{12} x_{23} \cdots x_{n-1,n}x_{1n}.
  \end{align*}
  The final sign flip comes from $x_{1n} = -x_{n1}$. 
  It follows that if $\det(X)$ is zero, then some $x_{ij}$ must be zero and thus the vector $x$ cannot lie in the open Grassmannian.
  For the final claim, we note that every principal submatrix of $X$ having size at least $4 \times 4$ is a point in a smaller Grassmannian. 
  Since the principal $A$-determinant of $\Delta_{2,n}$ is the product of principal minors of $X$ having size at least 4 \cite[Theorem 3.6]{CHKO}, we may write it as a monomial on the Grassmannian, and hence it does not intersect $\mathrm{Gr}(2,n)^\circ$. 
\end{proof}

In the proof of Proposition~\ref{prop:A-det}, we computed the coefficients \eqref{eq:coeff} by evaluating a restricted version of the signed excedance enumerator.
By \cite[p. 785]{siva2011}, the \emph{signed excedance enumerator} $\sum_{\sigma \in \mathfrak S_n}(-1)^{\mathrm{inv}(\sigma)}q^{\mathrm{exc}(\sigma)} = (1-q)^{n-1}$ is the determinant of the matrix
\begin{equation}\label{eq:exc}
      Q = 
  \begin{pmatrix}
    1 & q &q &  \cdots & q\\
    1 & 1 & q& \cdots & q\\
    \vdots & \vdots &  \vdots & & \vdots\\
    1 & 1 & 1& \cdots & 1
  \end{pmatrix}.
  \end{equation}
We will give a determinantal formula for the restricted excedance enumerator $\sum_{\substack{\sigma \in \mathfrak S_n\\ \sigma(S) = T}} \mathrm{sgn}(\sigma) q^{\mathrm{exc}(\sigma)}$ defined by $S$ and $T$, where $S$ and $T$ are subsets of $[n]$ having the same size.
The sum is now only over permutations that map the elements of $S$ to the elements of $T$. 
Let $Q_{S,T}$ denote the matrix $Q$ with the entries not in $S \times T$ or $S^c \times T^c$ zeroed out, where $S^c$ and $T^c$ denote the set complements of $S$ and $T$, respectively.
Let $S + 1$ denote the set $\{(s\bmod n)+1:s \in S\}$.
\begin{lemma}\label{lem:coeffs}
  Given subsets $\emptyset \subseteq S,T \subseteq [n]$,  with $|S| = |T|$, the restricted signed excedance enumerator is equal to
  \begin{equation*}
    \sum_{\substack{\sigma \in \mathfrak S_n\\ \sigma(S) = T}} \mathrm{sgn}(\sigma) q^{\mathrm{exc}(\sigma)} =
    \det(Q_{S,T}) =
    \begin{cases}
      (1-q)^{n-1} &\textrm{if } T = S = [n] \textrm{ or } T = S = \emptyset\\
      (1-q)^{n-2} &\textrm{if } T = S,\\
      -q(1-q)^{n-2} &\textrm{if } T = S+1,\\
      0 & \textrm{otherwise.}
    \end{cases}
  \end{equation*}
\end{lemma}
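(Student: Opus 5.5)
The first equality is the Leibniz formula. Write $Q_{ij}=q$ for $i<j$ and $Q_{ij}=1$ for $i\ge j$. Then $\prod_i Q_{i,\sigma(i)}=q^{\mathrm{exc}(\sigma)}$ and $\mathrm{sgn}(\sigma)=(-1)^{\mathrm{inv}(\sigma)}$. Zeroing out the entries outside $(S\times T)\cup(S^c\times T^c)$ kills exactly those permutations with $\sigma(S)\neq T$, since $\sigma(S)=T$ forces $\sigma(S^c)=T^c$. So $\det(Q_{S,T})$ is the restricted enumerator, and the work is in evaluating this determinant.

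\emph{Reduction to blocks.} Reorder the rows as $S$ then $S^c$ and the columns as $T$ then $T^c$, each in increasing order. The matrix becomes block diagonal, so
\[\det(Q_{S,T})=\varepsilon_{S,T}\,\det Q[S,T]\,\det Q[S^c,T^c].\]
Here $\varepsilon_{S,T}=\pm1$ is the product of the signs of the two shuffle permutations, and $Q[S,T]$ is the submatrix with rows $S$ and columns $T$.

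\emph{Evaluating a block.} Take $S=\{s_1<\dots<s_k\}$ and $T=\{t_1<\dots<t_k\}$. Row $a$ of $Q[S,T]$ has the form $(1,\dots,1,q,\dots,q)$ with $m_a=\#\{b:t_b\le s_a\}$ ones, and $m_1\le\dots\le m_k$.
\begin{itemize}
\item Subtract column $b+1$ from column $b$ for $b=1,\dots,k-1$. Column $b$ then has $1-q$ exactly in the rows with $m_a=b$ and zeros elsewhere. Column $k$ has entries $1$ (where $m_a=k$) or $q$ (otherwise).
\item For the determinant to be nonzero, each value $b\in\{1,\dots,k-1\}$ must be attained by exactly one row. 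Since $m$ is nondecreasing, only two patterns survive:
\begin{itemize}
\item $m_a=a$ for all $a$, i.e.\ $t_a\le s_a<t_{a+1}$. This gives $\det=(1-q)^{k-1}$.
\item $m_a=a-1$ for all $a$, i.e.\ $s_a<t_a\le s_{a+1}$. This gives $\det=\pm q(1-q)^{k-1}$.
\end{itemize}
The empty block has determinant $1$.
\end{itemize}
The full case $S=T=[n]$ is the first pattern with $k=n$, recovering $(1-q)^{n-1}$ from \cite{siva2011}.

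\emph{Combining the blocks.} Apply the block analysis to both $(S,T)$ and $(S^c,T^c)$, writing $r_a$ and $r'_a$ for the respective row-to-column maps.
\begin{itemize}
\item If both blocks are of the first type, I expect the interlacing inequalities $t_a\le s_a$ on $S$ and on $S^c$ to force $T=S$. This is because $\sum_a(s_a-t_a)$ over both blocks totals $0$. The determinant is then $(1-q)^{|S|-1}(1-q)^{n-|S|-1}=(1-q)^{n-2}$, with $\varepsilon=1$.
\item If exactly one block is of the second type, the strict inequalities $s_a<t_a$ on that block force a cyclic shift. So $T=S+1$, with the wrap $n\mapsto1$ absorbed by the inequality $t_a\le s_{a+1}$ in the other block. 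The determinant then picks up one factor $q$.
\item If both blocks are of the second type, the strict inequalities contradict $\sum_a(t_a-s_a)=0$, so the determinant vanishes.
\end{itemize}
Every other configuration of $(S,T)$ kills one block, giving $0$.

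The main obstacle is this last combination step. It has two parts: proving that the two interlacing conditions together characterize exactly $T=S$ or $T=S+1$, and tracking the total sign $\varepsilon_{S,T}$ together with the sign of the second-type block, to show it is always $-1$ in the case $T=S+1$. For the sign, I would first try computing it directly from the single row that carries the $q$ entry. That row corresponds to the element of $S$ that wraps around or is shifted past an element of $S^c$. As a check, I would verify the claimed answer on small $n$ and against the sum over $S$ used in \eqref{eq:coeff}.
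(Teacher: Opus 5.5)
Your block-by-block evaluation is a sound route and differs from the paper's. However, the proposal stops exactly where the content of the $T=S+1$ and vanishing cases lies. The mixed case is only gestured at (``the strict inequalities force a cyclic shift''). The converse, that $T=S+1$ really yields two nonsingular blocks of mixed type, is not addressed. The sign is left open.

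Your sum arguments for the cases where both blocks have the same pattern are fine. To finish the rest, encode everything in $D(x)=|T\cap[1,x]|-|S\cap[1,x]|$ for $0\le x\le n$, so that $D(0)=D(n)=0$.
\begin{itemize}
\item Your first pattern for the $S\times T$ block, $t_a\le s_a<t_{a+1}$, says exactly that $D=0$ on $S$. The second says $D=-1$ on $S$.
\item The analogous function for $(S^c,T^c)$ is $-D$. So the two patterns for the $S^c\times T^c$ block say $D=0$, resp.\ $D=1$, on $S^c$.
\item Now $D(n)=0$ rules out both blocks having the second pattern. In the mixed cases it forces $n\notin S$ (second pattern on $S\times T$), resp.\ $n\in S$ (second pattern on $S^c\times T^c$).
\item Writing $[x\in T]$ for the indicator, $[x\in T]=[x\in S]+D(x)-D(x-1)$ reconstructs $T$. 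If $D\equiv 0$ then $T=S$. In both mixed cases one gets $x\in T\iff x-1\in S$ (reading $0$ as $n$), i.e.\ $T=S+1$.
\item Conversely, if $T=S+1$ then for $1\le x\le n$ one has $D(x)=-[x\in S]$ when $n\notin S$ and $D(x)=1-[x\in S]$ when $n\in S$. So the blocks are of mixed type and nonsingular.
\end{itemize}

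For the sign, the two shuffles give $\varepsilon_{S,T}=(-1)^{\sum_{s\in S}s+\sum_{t\in T}t}$. For $T=S+1$, the difference $\sum_{t\in T}t-\sum_{s\in S}s$ equals $k$ if $n\notin S$ and $k-n$ if $n\in S$. Your column-reduced second-pattern block of size $\ell$ has determinant $(-1)^{\ell-1}q(1-q)^{\ell-1}$. Indeed, its first row ($m_1=0$) has the $q$ in the last column as its only nonzero entry, and the complementary minor is diagonal with entries $1-q$. With $\ell=k$, resp.\ $\ell=n-k$, the signs multiply to $(-1)^{k}(-1)^{k-1}=-1$, resp.\ $(-1)^{k-n}(-1)^{n-k-1}=-1$, giving $-q(1-q)^{n-2}$.

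Also add the one line behind ``exactly one row'' in your block analysis. If some value in $\{1,\dots,k-1\}$ is attained twice, then no row has $m_a\in\{0,k\}$. The last column is then $q$ times the all-ones vector and lies in the span of the other $k-1$ columns.

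Compared with the paper: there, $\det Q_{S,S}$ and $\det Q_{S,S+1}$ come from recognizing the blocks as the matrices in \cite[Theorems 1 and 6]{siva2011}, with the shuffle sign left implicit. Vanishing is argued separately. Nonsingularity of the $S\times T$ block puts exactly one element of $T$ in each cyclic interval $[s_i+1,s_{i+1}]$. The paper then asserts that some $s_i,s_i+1\in T^c$, which makes two columns of the $S^c\times T^c$ block dependent.

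Your approach is self-contained, treats the two blocks symmetrically, and obtains vanishing directly from the classification. The symmetry is not cosmetic. Take $n=4$, $S=\{1,2\}$, $T=\{2,4\}$ (the third matrix in Example~\ref{ex:QST}). The $S\times T$ block is nonsingular, yet $T^c=\{1,3\}$ contains no such pair. The $S^c\times T^c$ block is singular because columns $1,2,3$ of $Q$ all agree on rows $3,4$. In your framework this is simply the non-pattern $m=(2,2)$ for that block, which your argument handles automatically.
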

  \begin{example}\label{ex:QST}
  Let $n = 4$ and $S = \{1,2\}$. The matrices $Q_{S,T}$ and their determinants are shown below for $T = S$ and $T = S + 1$, and $T \neq S, S + 1$:
  \begin{gather*}
    \begin{matrix}
      T && \{1,2\} &&  \{2,3\} && \{2,4\}\\
      \\
      Q_{S,T} && 
      \begin{pmatrix}
      1 & q & 0 & 0\\
      1 & 1 & 0 & 0\\
      0 & 0 & 1 & q\\
      0 & 0 & 1 & 1
    \end{pmatrix}
    &&
    \begin{pmatrix}
      0 & q & q & 0\\
      0 & 1 & q & 0\\
      1 & 0 & 0 & q\\
      1 & 0 & 0 & 1
    \end{pmatrix}
    &&
    \begin{pmatrix}
      0 & q & 0 & q\\
      0 & 1 & 0 & q\\
      1 & 0 & 1 & 0\\
      1 & 0 & 1 & 0
    \end{pmatrix}\\ \\
    \det(Q_{S,T}) &&
    (1 - q)^2 && -q(1 - q)^2 && 0
    \end{matrix}\qedhere
  \end{gather*}
\end{example}

  \begin{proof}
    The case $T = S = [n]$ or $T = S = \emptyset$ recovers the classical signed excedance enumerator, and the determinantal form is given in \cite{siva2011}. 
  Using the Leibniz expansion of the determinant, we see that taking $\det(Q_{S,T})$ restricts the sum to those permutations mapping $S$ to $T$.
  It suffices to prove that $\det(Q_{S,S}) = (1-q)^{n-2}$,  $\det(Q_{S, S+1}) = -q(1-q)^{n-2}$, and $\det(Q_{S,T}) = 0$ for all other $T$.
  In the $\emptyset \subsetneq T = S \subsetneq [n]$ case, $Q_{S,T}$ is block diagonal with two blocks of the form \eqref{eq:exc} having sizes $|S|$ and $n - |S|$.
  Thus the determinant is $(1-q)^{|S|-1}(1 - q)^{n - |S| - 1} = (1-q)^{n-2}$; see \cite[Theorem 1]{siva2011}.

  The matrix $Q_{S,S+1}$ has two different types of blocks.
  One has size $|S|$ and occupies the entries $S \times T$; the other has size $n - |S|$ and occupies the entries $S^c \times T^c$. 
  Depending on whether $n$ is in $S$ or $S^c$, one block has the form \eqref{eq:exc}, and the other has the form
    \begin{equation*}
    \begin{pmatrix}
      q & q & q & \cdots & q\\
      1 & q & q & \cdots & q\\
      \vdots & \vdots & \vdots & & \vdots\\
      1 & 1& 1 & \cdots & q
    \end{pmatrix}.
  \end{equation*}
  In either case, the determinant of $Q_{S,T}$ is $-q(1-q)^{n-2}$; see \cite[Theorem 6]{siva2011}. 

  We now argue that $Q_{S,T}$ is singular if $T \notin\{ S, S+1\}$.
  If the $S \times T$ block is singular, then $Q_{S,T}$ is singular.
  We therefore assume that the $S \times T$ block is nonsingular and argue that the $S^c \times T^c$ block is singular. 
  We order the elements of $S$ as $s_1 < \ldots < s_r$.
  Note that the $|S| \times n$ submatrix formed by taking the rows of $Q$ indexed by $S$ has the property that the columns $s_i+1, s_i+2, \ldots, s_{i+1}$ are pairwise dependent for $i \in [r]$. 
  Since the $S \times T$ block is nonsingular, every cyclic interval $[s_i+1, s_{i+1}]$ contains precisely one element $t_i$ of $T$.
  If $t_i = s_i+1$ for all $i$, then $T = S+1$, and if $t_i = s_{i+1}$, then $T = S$.
  If $T \notin\{ S, S+1\}$, then there exists some $i$ such that $s_{i}, s_{i}+1\in T^c$.
  The columns of $Q$ indexed by $s_i$ and $s_i+1$ differ only in the $s_i$th row.
  Since $s_i \notin S^c$, the $s_i$ and $s_i+1$ columns in $Q_{S,T}$ are dependent.
  Thus $\det(Q_{S,T}) = 0$.
\end{proof}
\begin{proof}[Proof of Theorem~\ref{thm:grass-ml-deg}]
  Since $\mathrm{Gr}(2,n)$ is smooth, we apply \cite[Theorem 1.7]{HS} and show that $\chi(\mathrm{Gr}(2,n) \backslash \mathcal H) = (2^{n-1} - n)(n-3)!$.
  The base $\mathcal M_{0,n}$ of the map
  \begin{align*}
    \mathrm{Gr}(2,n)^\circ \backslash V(\sum_{1 \leq i < j \leq n}x_{ij}) \longrightarrow \mathcal M_{0,n},&&    x \mapsto [x]
  \end{align*}
  has Euler characteristic $\chi(\mathcal M_{0,n}) = (-1)^{n-3}(n-3)!$ by \cite[Proposition~1]{ST}.
  Since the Euler characteristic is multiplicative for maps of complex algebraic varieties (see \cite[Theorem 3.2.3]{ACM}), it suffices to show that all fibers  have Euler characteristic $(-1)^{n-1}(2^{n-1}-n)$.
  By Proposition~\ref{prop:A-det} and \cite[Theorem 6.2.4]{GKZ}, the fiber $\mathcal F_x$ has Euler characteristic $\chi(\mathcal F_x) = (-1)^{n-1}\mathrm{Vol}(\Delta_{2,n}) = (-1)^{n-1}(2^{n-1} - n)$ for all $x \in \mathcal M_{0,n}$.
  Thus $\chi(\mathrm{Gr}(2,n)^\circ \backslash V(\sum_{1 \leq i < j \leq n}x_{ij})) = \chi(\mathcal M_{0,n}) \cdot \chi(\mathcal F_x) = (n-3)!(2^{n-1} - n)$, as desired.
\end{proof}

The proof does not extend directly to the Grassmannian $\mathrm{Gr}(d,n)$.
For $d > 2$, the principal $A$-determinant of $\Delta_{d,n}$ does intersect the open Grassmannian.
We demonstrate this in the example $d = 3, n = 6$.

  The configuration space $X(3,6)$ has ML degree and Euler characteristic $26$ \cite[Example 5.2]{ABFKST}.
  The generic fiber has Euler characteristic $-\mathrm{Vol}(\Delta_{3,6}) = -66$.
  The product is $-1\,716$, whereas the ML degree of $\mathrm{Gr}(3,6)$ is $1\,937$ \cite[Theorem 4.1]{DFRS}.
  The discrepancy comes from the fact that the principal $A$-determinant of the $(3,6)$-hypersimplex intersects the open Grassmannian $\mathrm{Gr}(3,6)^\circ$.
  By \cite[Theorem 10.1.2]{GKZ}, this principal $A$-determinant is the product of $A$-discriminants of simplices, second hypersimplices, and of the $(3,6)$-hypersimplex.
  The $A$-discriminant of a simplex is either $1$ or the variable $x_{ijk}$, so these do not intersect the open Grassmannian.
  Since $\mathrm{Gr}(3,6)$ contains smaller Grassmannians, by Proposition~\ref{prop:A-det}, the $A$-discriminants of the second hypersimplices do not intersect the open Grassmannian $\mathrm{Gr}(3,6)^\circ$.
  Therefore, the only possible intersection is with the $A$-discriminant of $\Delta_{3,6}$.
  This $A$-discriminant is a hypersurface in $\mathbb P^{19}$ of degree $96$ \cite[p. 236]{HS18}. 

Using numerical methods, we compute the intersection of the Grassmannian $\mathrm{Gr}(3,6)$ with the $A$-discriminant of $\Delta_{3,6}$.
Write $I_{\mathrm{Gr}(3,6)} \subseteq R := \mathbb R[x_{ijk}: 1 \leq i < j < k \leq n]$ for the prime ideal of $\mathrm{Gr}(3,6)$, which is minimally generated by $35$ Pl\"ucker quadrics. 
\begin{theorem}\label{thm:3,6}
  The intersection in  $\mathbb P^{19}$ of the $A$-discriminant of $\Delta_{3,6}$ with the Grassmannian $\mathrm{Gr}(3,6)$ contains the two irreducible components
  \begin{align}\label{eq:components}
    V(I_{\mathrm{Gr}(3,6)} + \langle x_{123}x_{456} - x_{124}x_{356} \rangle)
    && \textrm{and}
    &&
    V(I_{\mathrm{Gr}(3,6)} + \langle x_{126}x_{345} - x_{136}x_{245} \rangle),
  \end{align}
  each of which has dimension $8$ and degree $84$.
\end{theorem}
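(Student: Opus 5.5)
The plan has three parts: show each of the two varieties in \eqref{eq:components} is irreducible of dimension $8$ and degree $84$, show each lies inside the $A$-discriminant $D_{3,6}$ of $\Delta_{3,6}$, and conclude from dimensions that each is a full component of $\mathrm{Gr}(3,6)\cap D_{3,6}$. Since $\mathrm{Gr}(3,6)$ has dimension $9$ and $D_{3,6}$ is a hypersurface not containing it, every component of the intersection has dimension exactly $8$. So once a variety $V$ of \eqref{eq:components} is shown to be irreducible of dimension $8$ and contained in $D_{3,6}$, it is automatically an irreducible component of the intersection.

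I would start by reducing to one component via symmetry. The cyclic shift $i\mapsto i+1$ on $[6]$, with signs from reordering, preserves $\mathrm{Gr}(3,6)$. I expect that a suitable relabeling permutation sends $x_{123}x_{456}-x_{124}x_{356}$ to $\pm(x_{126}x_{345}-x_{136}x_{245})$, modulo the Plücker relations if needed. Because $\Delta_{3,6}$ is invariant under coordinate permutations, $D_{3,6}$ is too, so it suffices to treat the first variety $V=V(I_{\mathrm{Gr}(3,6)}+\langle g\rangle)$ with $g=x_{123}x_{456}-x_{124}x_{356}$.

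For irreducibility, dimension and degree, I would compute with the ideal $J=I_{\mathrm{Gr}(3,6)}+\langle g\rangle$.
\begin{itemize}
\item Since $g\notin I_{\mathrm{Gr}(3,6)}$, Krull's principal ideal theorem gives that every component of $V$ has dimension $8$.
\item Bézout applied to a quadric section of $\mathrm{Gr}(3,6)$ bounds the total degree by $2\cdot 42=84$.
\item Irreducibility, with degree exactly $84$, I would get by numerical irreducible decomposition: slice $V$ with a generic linear space of codimension $8$ to obtain a witness set of $84$ points, then run monodromy and show it acts transitively. Alternatively, a primality check of $J$ in Macaulay2, e.g.\ in a Stiefel-coordinate chart, would do.
\end{itemize}

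The main obstacle is showing $V\subseteq D_{3,6}$, since the degree-$96$ polynomial defining $D_{3,6}$ is not available explicitly. I would exhibit singular hypersurfaces directly. For a generic point $x\in V$, I need some $\lambda\in(\CC^*)^6$ at which $f_x$ and all of its partial derivatives $\partial f_x/\partial\lambda_i$ vanish. Numerically, this means solving that system together with the Plücker relations and $g=0$ on a witness set. Equivalently, I would take the incidence variety of pairs $(x,\lambda)$ with $x\in \mathrm{Gr}(3,6)$ and $\lambda$ a singular point of $f_x$, and check that its projection covers all $84$ witness points of $V$.

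As a conceptual cross-check, I would look for the singular point in a natural geometric form. The binomial $g$ suggests that on $V$ the six points in $\PP^2$ lie in special position, for instance a conic condition, which should force $f_x$ to factor or acquire a node at a torus point. I would test whether the condition $g=0$ makes a particular $\lambda$ critical, for example one with $\lambda_i$ built from the coordinates of the special configuration.

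Finally, given containment for generic points of the irreducible $8$-dimensional $V$, containment holds for all of $V$ by closedness, and the dimension argument above completes the proof.
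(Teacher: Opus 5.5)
Your plan is correct, but it runs in the opposite direction from the paper's proof. The paper \emph{discovers} the components. It works in the chart \eqref{eq:X36} of $X(3,6)$ and numerically decomposes the incidence variety \eqref{eq:incidence} into two $4$-dimensional components. A linear-slice count and torus homogeneity then show that their image is cut out on $\mathrm{Gr}(3,6)$ by a quartic $p$ of multidegree $(2,\dots,2)$, which is interpolated from sample points. Only then are the two binomials extracted, from a primary decomposition of $I_{\mathrm{Gr}(3,6)}+\langle p\rangle$, with the degree computed symbolically. You instead take the binomials as given and \emph{verify}: Krull, B\'ezout ($2\cdot 42=84$) and a monodromy or primality check give irreducibility, dimension and degree; a membership test at witness points gives containment in $D_{3,6}$; and since every component of $\mathrm{Gr}(3,6)\cap D_{3,6}$ has dimension $8$, containment upgrades to being a component. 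Your route avoids the interpolation and the degree bookkeeping for $p$, explains the number $84$, and halves the work by symmetry. The paper's route needs no advance knowledge of the answer, and it identifies the binomial loci as exactly the images of the reduced components of the incidence variety. A verification argument cannot do that, though the theorem as stated does not require it.

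A few points to tighten. First, the symmetry step is valid only for relabelings that act on Pl\"ucker coordinates without signs. $D_{3,6}$ is invariant under $x_I\mapsto x_{\sigma(I)}$ and under torus rescalings, but not under arbitrary sign changes. For instance, by Proposition~\ref{prop:A-det} the discriminant of $\Delta_{2,4}$ cuts out $V(x_{12}x_{23}x_{34}x_{14})$ on $\mathrm{Gr}(2,4)$, and the signed transposition of columns $2,3$ sends this to $V(x_{13}x_{23}x_{24}x_{14})$, a different locus. You are fine here: for $d=3$ the cyclic shift carries no signs, and $i\mapsto i-1$ sends $x_{123},x_{456},x_{124},x_{356}$ to $x_{126},x_{345},x_{136},x_{245}$, so it maps the first binomial exactly to the second.

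Second, you use $\mathrm{Gr}(3,6)\not\subseteq D_{3,6}$ without justification. It needs one point of $\mathrm{Gr}(3,6)^\circ$ certified to lie off $D_{3,6}$, which is the only non-existence statement in your plan.

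Finally, the special position is not a conic condition. Write $v_1,\dots,v_6$ for the columns, and let $q=x_{123}v_4-x_{124}v_3$, which is the intersection point of the lines $\overline{v_1v_2}$ and $\overline{v_3v_4}$. Then $x_{123}x_{456}-x_{124}x_{356}=\det(q,v_5,v_6)$. So the first component is the locus where $\overline{v_1v_2}$, $\overline{v_3v_4}$, $\overline{v_5v_6}$ are concurrent, and the second is the same condition for $\overline{v_6v_1}$, $\overline{v_2v_3}$, $\overline{v_4v_5}$. That concurrency configuration is the natural place to look for your conceptual cross-check.
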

\begin{proof}
  The $A$-discriminant of $\Delta_{3,6}$ is the set of coefficients $x$ such that the derivatives of $f_x(\lambda) = \sum_{1 \leq i < j < k \leq n}x_{ijk} \lambda_i\lambda_j \lambda_k$ have a common root.
  We aim to find the vectors $(x_{ijk})$ in the Grassmannian with this property.
  Since the $A$-discriminant is invariant under the action of the torus $(\mathbb C^*)^n$, it makes sense to say that an equivalence class $[x] \in X(3,6)$ is in the $A$-discriminant.
  We therefore suppose our coefficients $x$ are in the model for $X(3,6)$ parametrized by matrices of the form
  \begin{equation}\label{eq:X36}
    \begin{pmatrix}
      1 & 0 & 0 & 1 & 1 & 1\\
      0 & 1 & 0 & 1 & t_1 & t_2\\
      0 & 0 & 1 & 1 & t_3 & t_4
    \end{pmatrix}. 
  \end{equation}
  We write $x(t)$ for the vector of $3 \times 3$ minors of \eqref{eq:X36}.
  A vector $x(t)$ is in the $A$-discriminant if there exists $\lambda \in (\mathbb C^*)^6$ such that the pair $(t, \lambda)$ is in the incidence variety
  \begin{align}\label{eq:incidence}
    \overline{\{(t, \lambda) \in (\mathbb C^*)^4 \times (\mathbb C^*)^6 : \frac{\partial f_{x(t)}}{\partial \lambda_i}(\lambda) = 0 \textrm{ for } i = 1, \ldots 6\}}.
  \end{align}

  The proof proceeds by numerical and symbolic computations which are available on the \verb+Zenodo+ page accompanying this paper. 
  A numerical irreducible decomposition performed using \verb+HomotopyContinuation.jl+ \cite{hc} reveals that \eqref{eq:incidence} has two irreducible components of dimension 4.
  Therefore there exists a polynomial $p \in R$ such that $\mathrm{Gr}(3,6) \cap V(p)$ is contained in our intersection and has two reduced components. 
  It is possible that the intersection of $\mathrm{Gr}(3,6)$ with the $A$-discriminant has additional nonreduced components that are not detected by the numerical irreducible decomposition.

  To compute the degree of $p$, we intersect \eqref{eq:incidence} with a product $L_1 \times L_2$ of general linear spaces where $\dim(L_1) = 1$ and $\dim(L_2) = 5$ using \cite{hc}.
  Such a product of linear spaces has $4$ intersection points with \eqref{eq:incidence}. 
  Since the $A$-discriminant of $\Delta_{3,6}$ is invariant under the torus action, it is homogeneous with respect to the $\mathbb Z^6$ grading $\deg(x_{ijk}) = e_i + e_j + e_k$ where $e_i$ is the $i$th standard basis vector.
  In particular, the multidegree of the $A$-discriminant is a multiple of the all ones vector.
  Since $p$ has total degree $4$, the multidegree of $p$ is $(2,2,2,2,2,2)$. 
  The vector space of degree $(2,2,2,2,2,2)$ polynomials in $R$ has dimension $85$.
  By intersecting with general products of linear spaces $L_1 \times L_2$ as above, we sample $100$ points on the discriminant and create a $100 \times 85$ matrix where each entry is a degree $(2,2,2,2,2,2)$ monomial evaluated at one of these points.
  The kernel of this matrix has dimension $70$, with $69$ of these dimensions coming from the Pl\"ucker relations.
  The remaining kernel direction gives the coefficients of $p$, and a primary decomposition of $I_{\mathrm{Gr}(3,6)} + \langle p \rangle$ in \cite{oscar} reveals the binomial factors.
  The degree follows from a symbolic computation.
\end{proof}

As mentioned in the proof, current implementations of numerical irreducible decomposition can only detect reduced components of a scheme.
The ideal defining \eqref{eq:incidence} may have additional primary components that are not detected by the numerical irreducible decomposition.
Thus, the intersection of $\mathrm{Gr}(3,6)$ with the $A$-discriminant of $\Delta_{3,6}$ may have more components than \eqref{eq:components}.

Numerical computations suggest that for a generic $x$ in one of the two components \eqref{eq:components}, the fiber $\mathcal F_x$ of $x$ has Euler characteristic $\chi(\mathcal F_x) = -62$, and that for a generic $x$ in the intersection of the two components $\chi(\mathcal F_x) = -58$.
Each of these two components has Euler characteristic $-12$ and the intersection has Euler characteristic $5$.
If this were the full stratification, then using \cite[Lemma 2.3]{ABFKST}, the Euler characteristic would be $-1\,812$ instead of $-1\,937$.
More work is needed to determine the full stratification such that the Euler characteristic is constant on the fibers and whether there are additional components in the intersection.

\section{Real and Positive Critical Points}\label{sec:positive}
We now consider the scenario where the data point $u$ is in the probability simplex $\Delta_{\binom n d - 1}$.
In particular, we assume that all coordinates of $u$ are positive.
For the squared Grassmannian $\mathrm{sGr}(2,n)$ and the configuration space $\mathcal M_{0,n}$, this assumption implies that all critical points are real; see \cite[Proposition 1]{ST} and \cite[Theorem 1.3]{friedman2025}.
For the squared Grassmannian, all critical points are also positive, meaning that they all lie in the probability simplex.
On the other hand, viewing $\mathcal M_{0,n}$ as a linear model as in \cite{ST}, the log-likelihood function has at most one positive critical point by \cite[Proposition 7.3.4]{sullivant}. 

\begin{figure}[h! tbp]
  \centering
  \begin{subfigure}[b]{0.48\textwidth}
    \centering
    \caption*{$n = 6$}
    \includegraphics[width=0.8\textwidth]{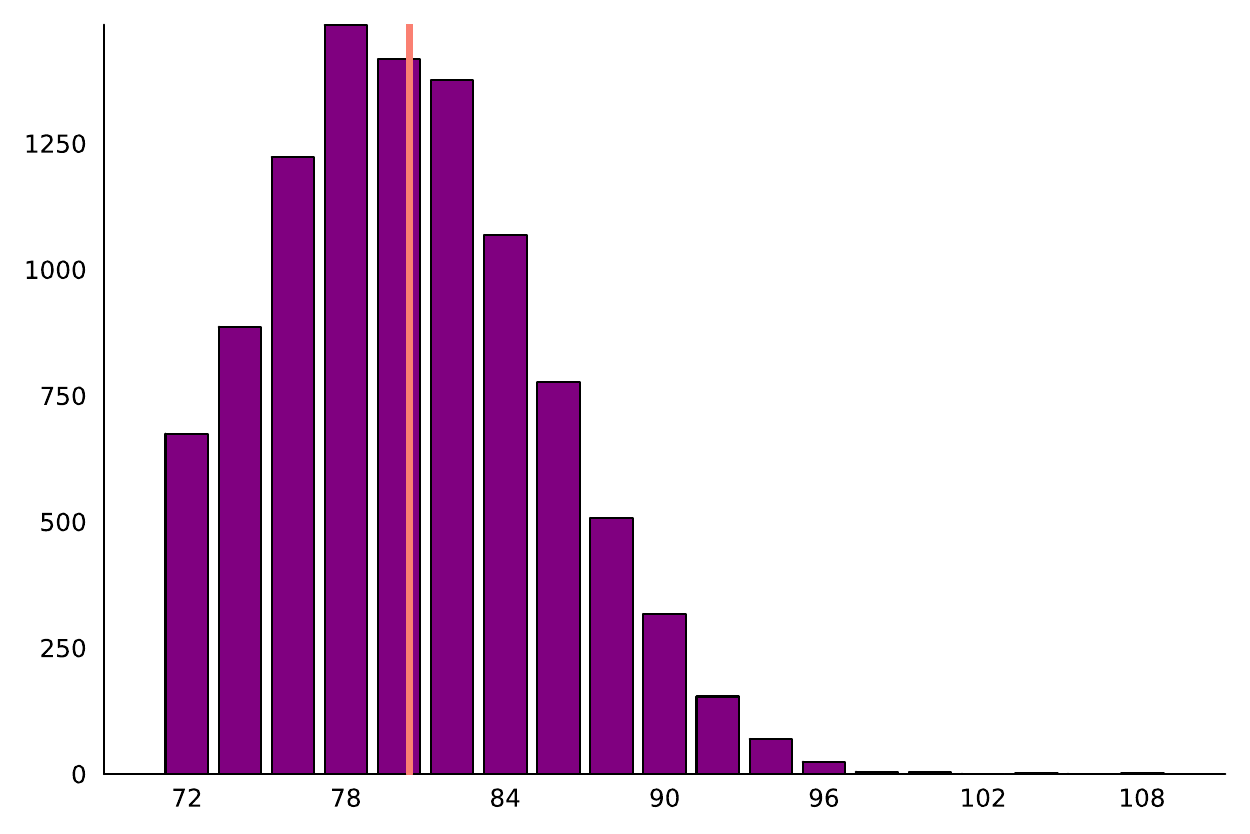}
    \label{fig:panel_a}
  \end{subfigure}
  \begin{subfigure}[b]{0.48\textwidth}
    \centering
    \caption*{$n = 7$}
    \includegraphics[width=0.8\textwidth]{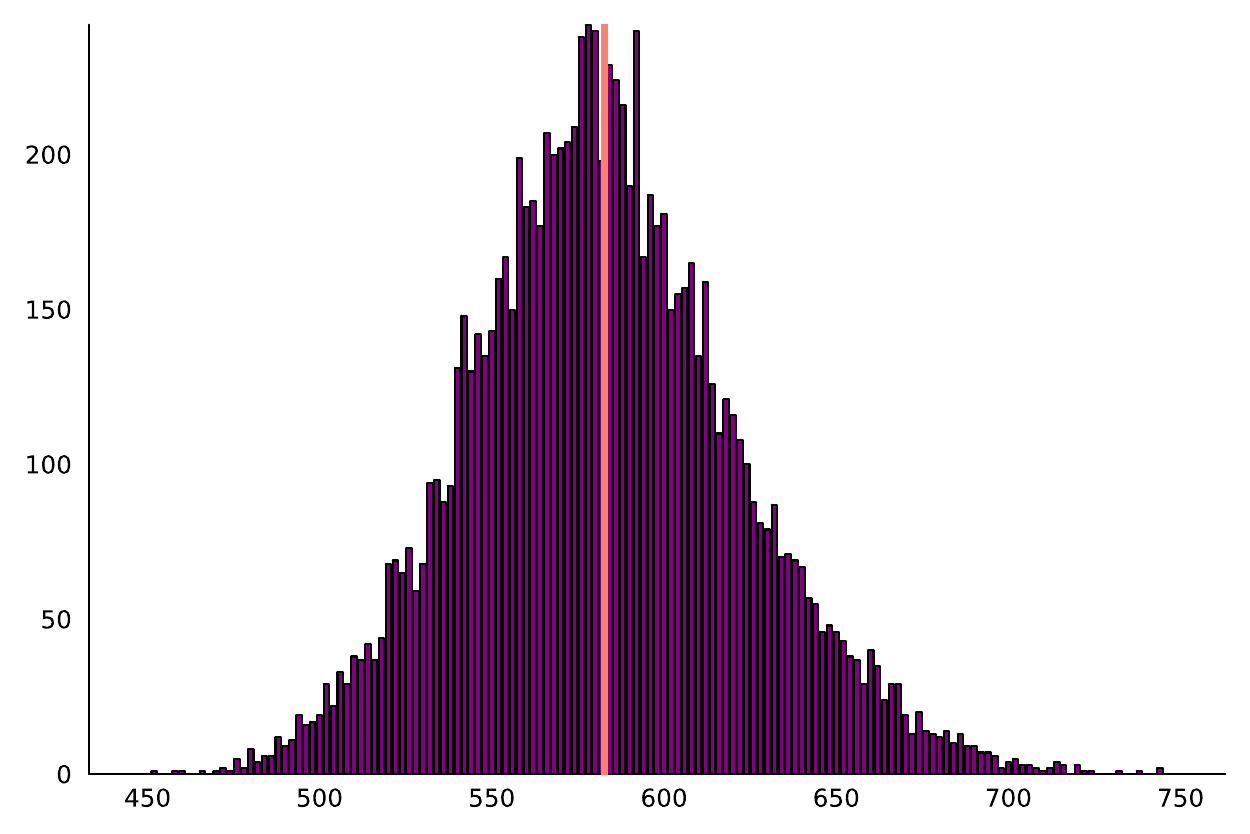}
    \label{fig:panel_b}
  \end{subfigure}
  \caption{
    Empirically observed real critical point counts of the log-likelihood function on $\mathrm{Gr}(2,n)$ for data sampled uniformly at random from the probability simplex over $10\,000$ independent trials. Bar heights show the frequency of each count.
    The average number of real critical points is shown in pink.
  }\label{fig:realcp}
  
\end{figure}

The critical points of the log-likelihood function on the Grassmannian are not all real.
We compute the number of real critical points for $d = 2$ and $n = 4,5,6,7$ for data sampled uniformly at random from the simplex $\Delta_{\binom{n}{2}-1}$. 
These computations were performed with \verb+HomotopyContinuation.jl+ \cite{hc}.
The real critical point counts for $n = 6,7$ are shown with their frequencies in Figure~\ref{fig:realcp}.
The average number of real and positive critical points is shown in Table~\ref{tab:avg} for $n = 4,5,6,7$.
We also show the percentage of our sample points that had more than $1$ critical point in the positive Grassmannian.
We did not observe any data points with more than three positive critical points. 
\begin{table}[b]
  \[
  \begin{array}{ccccc}
    \toprule
    n & 4 & 5 & 6 & 7 \\
    \midrule
    \textrm{ML Degree} & 4 & 22 & 156 & 1\,368 \\
    \textrm{Average No. Real Critical Points} & 2.1004 & 12.3824 & 80.3898& 582.6868\\
    \textrm{Average No. Positive Critical Points} & 1.0096 & 1.0076 & 1.0016 & 1.0016\\
    \textrm{Data with 3 Positive Critical Points} & 0.48\% & 0.38\% & 0.08\% & 0.08\%\\
    \bottomrule
  \end{array}
  \]
  \caption{ML degrees and average numbers of real and positive critical points for $d = 2$ over $10\,000$ trials sampling data uniformly at random from the probability simplex.
    The last row shows the percentage of sample points that had more than one positive critical point. 
  }\label{tab:avg}
\end{table}

%% In these experiments, we observe that the number of real critical points increases when the vector $u$ degenerates.
%% For $n = 4$ and $n = 5$, data points $(u_{ij})_{1 \leq i < j \leq n} \in \Delta_{\binom n 2 -1}$ in the simplex that are close to the vertices of the simplex indexed by pairs $\{i, (i\bmod n)+1\}$ have $4$ or $22$ critical points, respectively.
%% We observe this by solving the optimization problem for the data vectors with $u_{i,(i\bmod n)+1} = 1000$ and $u_{kj} = 1$ for all other pairs. 
%% For $n = 6$, we observe only $130$  real critical points (out of $156$ complex points) for the same data vector.

In the first version of this article, we conjectured that for $d = 2$, given positive data, the log-likelihood function has at most one critical point on the positive Grassmannian $\mathrm{Gr}(2,n)_{>0}$.
Counterexamples for $n = 7,8$ were quickly found by Alexander Urim and Andrew Leber using large language models.
We thank them for their communications.

In what follows, we describe a family of counterexamples in $\mathrm{Gr}(2,4)$. 
The Grassmannian $\mathrm{Gr}(2,4)$ is a hypersurface in $\mathbb P^5$ cut out by the Pl\"ucker quadric
\begin{align*}
  x_{12}x_{34} - x_{13}x_{24} + x_{14}x_{23}.
\end{align*}
For generic, complex data $u$, the maximum likelihood estimation problem
\begin{equation}\label{eq:opt}
  \max_{x \in \mathrm{Gr}(2,4)} \sum_{1 \leq i < j \leq 4} u_{ij}\log(x_{ij}) -
  (\sum_{1 \leq i < j \leq 4}	u_{ij}) \log  (\sum_{1 \leq i < j \leq 4}   x_{ij})
\end{equation}
has $4$ complex critical points by Theorem~\ref{thm:grass-ml-deg}. 
The number of real critical points of a data point $u \in \Delta_5^\circ$ depends on where $u$ lies relative to the \emph{logarithmic discriminant} 
\begin{align*}
  \overline{
    \big \{u \in \mathbb P^{5} : \textrm{the critical point set of } \eqref{eq:opt} \textrm{ is infinite or nonreduced}\big \}}.
\end{align*}
The logarithmic discriminant was first introduced for linear models in \cite{KKT25}, and it was studied for the moduli space $\mathcal M_{0,n}$ in \cite{BBB,KKT25}.
The logarithmic discriminant of $\mathrm{Gr}(2,4)$ is an irreducible hypersurface of degree $14$ in the data space $\PP^5$. 
Its defining polynomial has $8\,427$ terms:
\begin{gather}
  \scriptstyle
  \nonumber
    u_{12}^8u_{13}^2u_{14}^2u_{23}^2 + 2u_{12}^8u_{13}u_{14}^3u_{23}^2 + u_{12}^8u_{14}^4u_{23}^2 + 8u_{12}^7u_{13}^3u_{14}^2u_{23}^2 + 16u_{12}^7u_{13}^2u_{14}^3u_{23}^2\\  \scriptstyle
    \label{eq:disc} 
    + 8u_{12}^7u_{13}u_{14}^4u_{23}^2 + 26u_{12}^6u_{13}^4u_{14}^2u_{23}^2 + 48u_{12}^6u_{13}^3u_{14}^3u_{23}^2 + 16u_{12}^6u_{13}^2u_{14}^4u_{23}^2 + 44u_{12}^5u_{13}^5u_{14}^2u_{23}^2\\ \nonumber  \scriptstyle
    + 64u_{12}^5u_{13}^4u_{14}^3u_{23}^2 - 16u_{12}^5u_{13}^3u_{14}^4u_{23}^2 + 41u_{12}^4u_{13}^6u_{14}^2u_{23}^2 + 26u_{12}^4u_{13}^5u_{14}^3u_{23}^2 - 95u_{12}^4u_{13}^4u_{14}^4u_{23}^2 + \cdots
  \end{gather}
The full discriminant was computed in \verb+Oscar.jl+ \cite{oscar} and is available on \verb+Zenodo+.
For data in a connected component of the complement of the logarithmic discriminant, the number of real critical points of \eqref{eq:opt} is constant.
The next result shows that the logarithmic discriminant also governs the number of positive solutions.
\begin{proposition}
  If two data points $u, u' \in \Delta_5^\circ$ are in the same connected component of the complement of \eqref{eq:disc} intersected with $\Delta_5^\circ$, then \eqref{eq:opt} has the same number of positive critical points for $u$ and $u'$. 
\end{proposition}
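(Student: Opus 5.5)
The plan is a continuity argument along a path, with the boundary of the positive orthant as the only place where positive critical points could appear or disappear. Let $C$ be the connected component of the complement of \eqref{eq:disc} in $\Delta_5^\circ$ that contains $u$ and $u'$. Since $C$ is an open subset of the open simplex and complements of hypersurfaces are locally path connected, $C$ is path connected, so we may join $u$ to $u'$ by a continuous path $\gamma:[0,1]\to C$.

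For every $u\in C$ the critical point set of \eqref{eq:opt} consists of exactly $4$ reduced points, by Theorem~\ref{thm:grass-ml-deg} and the definition of the logarithmic discriminant. All these points lie in the very affine variety $\mathrm{Gr}(2,4)\backslash\mathcal H$.

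By the implicit function theorem applied to the critical equations, the critical points vary continuously, indeed analytically, along $\gamma$. Their number stays $4$, so no critical point can escape to infinity or to $\mathcal H$ while $\gamma$ stays in $C$. Concretely, the critical set over $C$ forms a finite unramified covering of degree $4$ in $\mathrm{Gr}(2,4)\backslash\mathcal H$. We lift $\gamma$ to $4$ continuous branches $x^{(1)}(t),\dots,x^{(4)}(t)$. Complex conjugation preserves the critical set for real $u$, and the points are distinct. Hence a branch cannot pass from real to nonreal, since that would require two conjugate branches to collide. So each branch is either real for all $t$ or nonreal for all $t$.

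It remains to show that a real branch cannot change from positive to nonpositive. A real point $x$ of $\mathrm{Gr}(2,4)\backslash\mathcal H$ has all coordinates $x_{ij}\neq 0$ and $\sum x_{ij}\neq 0$. Positivity means that all $x_{ij}$ share a common sign in projective space. To change this condition along a continuous real branch, some coordinate $x_{ij}(t)$ would have to cross $0$. That would place the branch in $\mathcal H$, which is excluded.

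A little care is needed because the points live in projective space. We normalize by $\sum x_{ij}=1$, which is legitimate since $\sum x_{ij}\neq 0$ off $\mathcal H$. Then positivity is exactly the condition that all $x_{ij}>0$, and the argument above applies directly. Hence the number of positive critical points is constant along $\gamma$, and the counts for $u$ and $u'$ agree.

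The main obstacle is justifying that critical points cannot run off to the boundary $\mathcal H$ or to infinity along the path. I would handle this by noting that the critical point count is the ML degree $4$ exactly off the discriminant. This uses the fact that the logarithmic discriminant, as defined via closure, contains the data for which solutions degenerate. It also uses properness of the projection from the likelihood correspondence over the complement of the discriminant, which I would argue via the standard fact that the likelihood correspondence is a finite branched cover whose branch locus is this discriminant.
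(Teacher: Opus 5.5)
Your skeleton matches the paper's: continue the critical points along a path in $\Delta_5^\circ$, preserve reality via conjugation and distinctness, and note that positivity can only change when a coordinate passes through $0$. However, the step you yourself flag as the main obstacle is not resolved, and the justification you offer for it is false.

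The logarithmic discriminant \eqref{eq:disc} is the closure of the data for which the critical set is infinite or nonreduced. It does not record data for which a critical point lands on $\mathcal H$. At such a $u$ the critical set in $\mathrm{Gr}(2,4)\backslash\mathcal H$ simply has fewer than $4$ points, all still reduced, so $u$ need not lie on \eqref{eq:disc}. Theorem~\ref{thm:grass-ml-deg} only counts critical points for generic $u$. So your claim that every $u\in C$ has exactly $4$ reduced critical points is unjustified, and the inference ``their number stays $4$, so no critical point can escape'' is circular.

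In fact the paper finds that the closure of the set of data for which some critical point has a zero coordinate is a union of $35$ hyperplanes in $\PP^5$. Since \eqref{eq:disc} is irreducible of degree $14$, none of these hyperplanes lies on it. So over the complement of \eqref{eq:disc} in $\PP^5$ the critical points do \emph{not} form a degree-$4$ covering, and the ``standard fact'' that the branch locus is exactly the logarithmic discriminant fails here. Along a path crossing one of these hyperplanes at some $t_0$, an implicit-function branch can reach $x_{ij}=0$ at $t_0$. The number of positive critical points can then jump there even though $\gamma(t_0)$ never meets \eqref{eq:disc}.

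What is missing is an argument that this extra locus does not meet $\Delta_5^\circ$. The paper supplies it by computation. It identifies the $35$ hyperplanes by intersecting the locus with a general line. It then checks that none of them meets the relative interior of an edge of $\Delta_5$, i.e.\ no normal vector has two coordinates of strictly opposite sign, which forces each hyperplane to miss the open simplex.

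With that in hand, no critical point for data in $\Delta_5^\circ$ has a zero coordinate. A positive branch then cannot leave the positive orthant, and no real branch can enter it, so your remaining argument goes through. Without some such input, specific to $\mathrm{Gr}(2,4)$ and to positive data, the proof is incomplete.
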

\begin{proof}
  By assumption, there is a continuous path $\gamma: [0,1] \to \Delta_5^\circ$ with $\gamma(0) = u$ and $\gamma(1) = u'$ such that \eqref{eq:disc} does not vanish on $\gamma(t)$ for any $t \in [0,1]$.
  The path $\gamma$ induces a path $\Gamma : [0,1] \to \mathrm{Gr}(2,4)^4$ such that $\Gamma_1(t), \ldots, \Gamma_4(t)$ are the $4$ critical points of \eqref{eq:opt} for the data $\gamma(t)$. 
  Since $\gamma$ does not meet the logarithmic discriminant, the path $\Gamma$ preserves the reality of each critical point.
  Thus the number of positive critical points can differ only if $\Gamma_i(t)$ has a zero coordinate for some $i \in \{1,2,3,4\}$ and $t \in [0,1]$. 
  In other words, the number of positive critical points can change only if there exists $t \in [0,1]$ such that $\gamma(t)$ is in the set
  \begin{align}\label{eq:atypical}
  \overline{
    \big \{u \in \mathbb P^{5} : \textrm{a critical point of \eqref{eq:opt} has a zero coordinate} \big \}}.
  \end{align}
  Since $\gamma$ is contained in the open simplex $\Delta_5^\circ$, it suffices to prove that \eqref{eq:atypical} does not intersect $\Delta_5^\circ$ in codimension $1$. 
  By symbolically intersecting \eqref{eq:atypical} with a general line, we observe that it is the union of $35$ hyperplanes.
  A computation in \verb+Oscar.jl+ \cite{oscar} shows that none of these hyperplanes meet the relative interiors of any edges of the simplex.
  Thus they cannot intersect $\Delta_5^\circ$ and we conclude that the number of positive critical points is constant on $\gamma$. 
\end{proof}
\begin{table}[b]
  \begin{center}
  \begin{footnotesize}
  \begin{tabular}{cccccccc}
    \toprule
    & Region 1 & Region 2 & Region 3 & Region 4 & Region 5 & Region 6\\
    \midrule
    Boundary points & -- & $e_{12}$ & $ e_{23}$ & $ e_{34}$ & $e_{14}$ & $(e_{13} + e_{24})/2$ \\
    No. Real Critical points & 2 & 4 & 4 & 4 & 4 & 4\\
    No. Positive Critical points & 1 & 1 & 1 & 1 & 1 & 3\\
    \bottomrule
  \end{tabular}
    \end{footnotesize}
  \end{center}
  \caption{Data regions in the probability simplex $\Delta_5$.
    Region 1 fills most of the simplex.
    Each remaining region is a small open set in $\Delta_5^\circ$ whose Euclidean closure contains the boundary point in the first row of the table. 
  }\label{tab:regions}
\end{table}

By sampling data points, we have identified 6 regions in the complement of the logarithmic discriminant; see Table~\ref{tab:regions}.
For data in these regions, we observe three different phenomena: \eqref{eq:opt} has two real critical points, one of which is positive, \eqref{eq:opt} has four real critical points, one of which is positive, or \eqref{eq:opt} has four real critical points, three of which are positive.

We now turn our attention to Region 6 in Table~\ref{tab:regions}.
For data in this region, \eqref{eq:opt} has three positive critical points. 
To visualize this region, we slice our data space by setting $r = u_{12} = u_{34}$, $s = u_{13} = u_{24}$, and $t = u_{14} = u_{23}$.
The Pl\"ucker quadric in these new variables is $r^2 + t^2 - s^2$.
Restricted to this linear slice, the logarithmic discriminant \eqref{eq:disc} is
\begin{gather*}
  \scriptstyle
  r^6 + 8r^5s + 10r^5t + 36r^4s^2 + 56r^4st + 31r^4t^2 + 68r^3s^3 + 136r^3s^2t + 128r^3st^2 + 44r^3t^3 \\
  \scriptstyle  
  + 53r^2s^4 - 100r^2s^3t - 56r^2s^2t^2 + 128r^2st^3 + 31r^2t^4 + 12rs^5 - 178rs^4t - 100rs^3t^2 \\
  \scriptstyle  
  + 136rs^2t^3 + 56rst^4 + 10rt^5 - 2s^6 + 12s^5t + 53s^4t^2 + 68s^3t^3 + 36s^2t^4 + 8st^5 + t^6.
\end{gather*}
There is an additional linear factor which does not intersect $\Delta_2^\circ$. 
The logarithmic discriminant divides $\Delta_2$ into two regions.
One region contains the Grassmannian, and for data in this region \eqref{eq:opt} has a unique positive critical point.
A data point in the other region of the discriminant complement has three critical points on the Grassmannian; see Figure~\ref{fig:3crit}.
Since all three positive critical points are reduced, the positivity persists on an open region of $\Delta_5^\circ$. 
By sampling data from $\Delta_2$ and evaluating the discriminant, we find that the area of the portion above the discriminant is about 2.5\% of the total area of the simplex.

\begin{figure}[h! tbp]
  \begin{center}
    \includegraphics[width=0.4\textwidth]{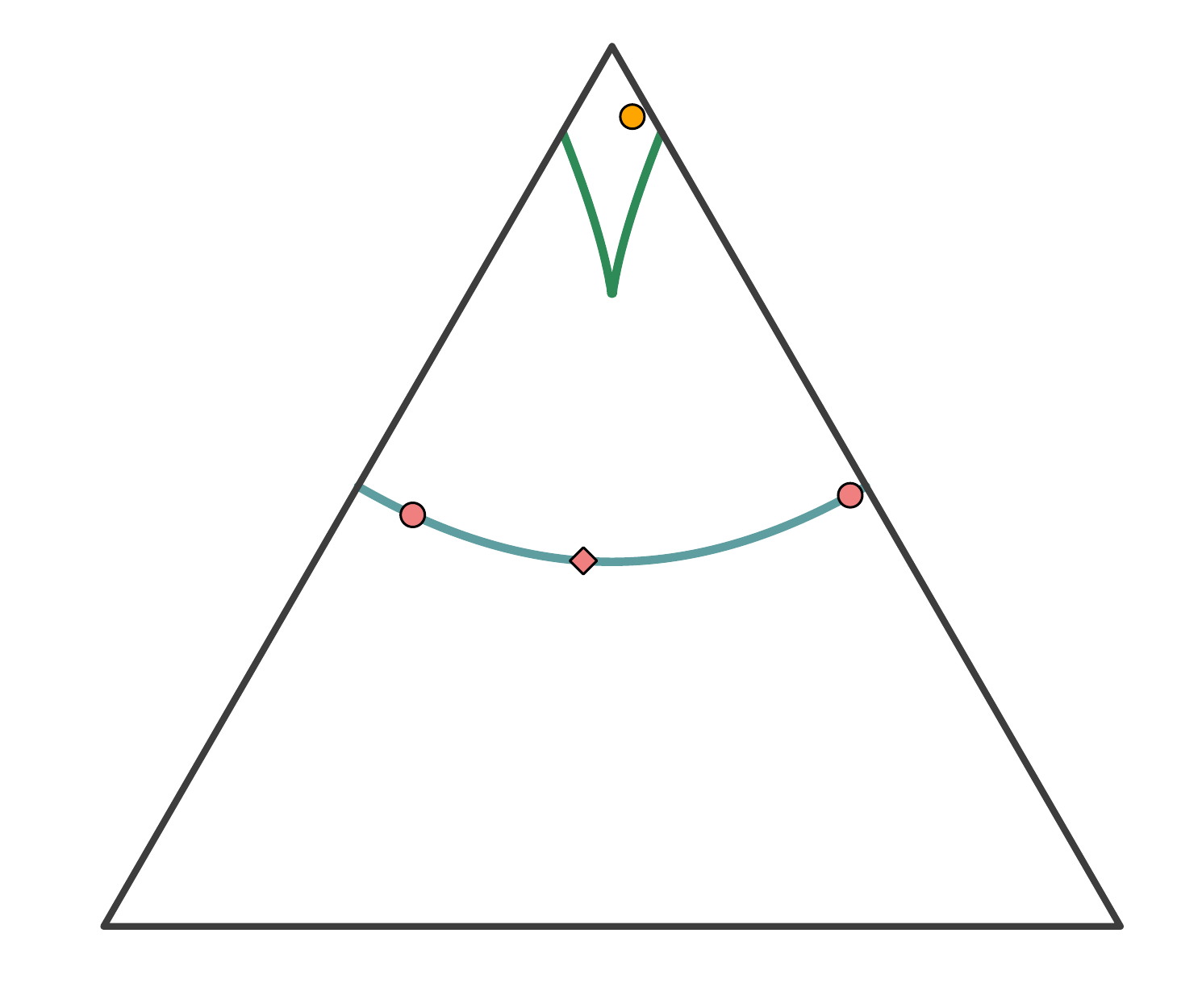}
  \end{center}
  \caption{
    The logarithmic discriminant (green) and Grassmannian $\mathrm{Gr}(2,4)$ (blue) in the slice $r = u_{12} = u_{34}$, $s = u_{13 } = u_{24}$ and $t = u_{14} = u_{23}$ of $\Delta_5$.
    The orange data point $(r,s,t) = (0.02,0.92,0.06)$ above the discriminant has three reduced positive critical points shown in pink on the Grassmannian.
    The two circles are local maxima and the diamond is a saddle point.
    The global maximum $(0.02, 0.49, 0.49)$ is the point furthest to the right.}\label{fig:3crit}
\end{figure}

Our calculations show that for the majority of data points in the simplex $\Delta_5$, the maximum likelihood estimation problem \eqref{eq:opt} has one positive critical point, and that there is a small, full dimensional region in $\Delta_5$ where \eqref{eq:opt} has more than one positive critical point.

\bigskip

{\centering
   {\bf Acknowledgments}\\}
\noindent
We thank Simon Telen for helpful conversations about the proof of Theorem~\ref{thm:grass-ml-deg}, and Serkan Ho\c{s}ten and Bernd Sturmfels for feedback on previous drafts.
We are~grateful~for~the supportive research environment at the Max Planck Institute for Mathematics in the Sciences.

\begin{small}
  
\end{small}
\noindent Hannah Friedman, UC Berkeley
\hfill \url{hannahfriedman@berkeley.edu}


\begin{thebibliography}{10}
    \setlength{\itemsep}{-0.6mm}
\bibitem{ABFKST} D.~Agostini, T.~Brysiewicz, C.~Fevola, L.~K\"uhne,  B.~Sturmfels and S.~Telen:
  {\em Likelihood degenerations},
  Advances in Mathematics~{\bf 414} (2023) 108863.
\bibitem{ABB} C.~Amendola, N.~Bliss, I.~Burke, C.R.~Gibbons, M.~Helmer, S.~Ho\c{s}ten, E.D.~Nash, J.I.~Rodriguez and D.~Smolkin,
  {\em The maximum likelihood degree of toric varieties},
  Journal of Symbolic Computation, {\bf 92} (2019) 222--242.   
\bibitem{BBB} B.~Betti, V.~Borovik, B.~Finkel, B.~Sturmfels and B.~Zacovic:
  {\em Graphical scattering equations},
  {\tt arXiv:2511.07316}. 
\bibitem{hc} P.~Breiding and S.~Timme:
  {\em HomotopyContinuation.jl: A package for homotopy continuation in Julia}
  in {\em Mathematical Software -- ICMS 2018},  Lecture Notes in Computer Science, Vol. 10931, 458--465, Springer, Cham (2018).
\bibitem{CHKS}
  F.~Catanese, S.~Ho\c{s}ten, A.~Khetan and B.~Sturmfels:
  {\em The maximum likelihood degree},
  American Journal of Mathematics {\bf 128} (2006) 671--697.
\bibitem{CHKO}
  O.~Clarke, S.~Ho\c{s}ten, N.~Kushnerchuk and J.~Oldekop:
  {\em Matroid stratifications of ML degrees of independence models},
  Algebraic Statistics {\bf 15} (2024) 199--223.
\bibitem{ACM}
  M.A.~de~Cataldo and L.~Migliorini:
  {\em The {H}odge theory of algebraic maps},
  Annales Scientifiques de l'\'Ecole Normale Sup\'erieure.
  Quatri\`eme S\'erie {\bf 38} (2005) 693--750.
\bibitem{DFRS}
  K.~Devriendt, H.~Friedman, B.~Reinke and B.~Sturmfels:
  {\em The two lives of the Grassmannian},
  Acta Universitatis Sapientiae, Mathematica {\bf 17} (2025) 8.
\bibitem{friedman2025}
  H.~Friedman:
  {\em Likelihood geometry of the squared Grassmannian},
  Proceedings of the American Mathematical Society {\bf 153} (2025) 4463--4474.
\bibitem{GKZ}
  I.M.~Gel$'$fand, M.M.~Kapranov and A.V.~Zelevinsky:
  \emph{Discriminants, resultants, and multidimensional determinants},
  Mathematics: Theory \& Applications, Birkh\"auser Boston, Inc., Boston, MA, 1994.
\bibitem{HS18}
  M.~Helmer and B.~Sturmfels:
  {\em Nearest points on toric varieties},
  Mathematica Scandinavica {\bf 122} (2018) 213--238.
\bibitem{HKS}
  S.~Ho\c{s}ten,  A.~Khetan and B.~Sturmfels:
  {\em Solving the likelihood equations},
  Foundations of Computational Mathematics {\bf 5} (2005) 389--407.
\bibitem{HS}
  J.~Huh and B.~Sturmfels:
  {\em Likelihood geometry} in
  {\em Combinatorial Algebraic Geometry},
  Lecture Notes in Mathematics, Vol. 2108, 63--117, Springer, Cham, (2014).
\bibitem{KKT25}
  L.~Kayser, A.~Kretschmer and S.~Telen:
  {\em Logarithmic discriminants of hyperplane arrangements},
  Le Matematiche {\bf 80} (2025) 325--346.
\bibitem{oscar}
  OSCAR -- Open Source Computer Algebra Research system, Version 1.7.2,
  The OSCAR Team, 2026. (\verb+https://www.oscar-system.org+)  
\bibitem{siva2011}
  S.~Sivasubramanian:
  {\em Signed excedance enumeration via determinants},
  Advances in Applied Mathematics {\bf 47} (2011) 783--794.
\bibitem{ST}
  B.~Sturmfels and S.~Telen: {\em Likelihood equations and scattering amplitudes},
  Algebraic~Statistics~{\bf 12} (2021) 167--186.
\bibitem{sullivant}
  S.~Sullivant:
  {\em Algebraic Statistics},
  Graduate Studies in Mathematics, Vol. 194, American
Mathematical Society, Providence, RI, 2018.
\bibitem{williams2023}
  L.~Williams:
  {\em The positive Grassmannian, the amplituhedron, and cluster algebras},
in {\em ICM--International Congress of Mathematicians}, 
Vol. 6, 4710--4737, EMS Press, Berlin (2023).
\end{thebibliography}
\end{document}